\newtheorem{thm}{Theorem}
\newtheorem{prop}{Proposition}
\newtheorem{cor}{Corollary}
\newtheorem{lem}{Lemma}
\theoremstyle{definition}
\newtheorem{rem}{Remark}
\providecommand{\NN}{\mathbb{N}}
\DeclareMathOperator{\spa}{span}
\def\u{\mathbf{u}}
\def\vv{\mathbf{v}}
\def\w{\mathbf{w}}
\def\a{a}
\def\m{\mathbf{m}}
\def\G{\mathbf{G}}
\def\F{\mathbf{F}}
\def\X{\mathbf{X}}
\def\Y{\mathbf{Y}}
\begin{document}

\title[Prime and semiprime submodules of $R^n$]{Prime and semiprime submodules of $R^n$ and a related Nullstellensatz for $M_n(R)$}

\author{J. Cimpri\v c}

\date{February 2nd 2021}

\thanks{This work was supported by grant P1-0222 from Slovenian Research Agency}

\subjclass[2020]{13C10,16D25,14A22}

\address{University of Ljubljana, Faculty of Mathematics and Physics, Department of Mathematics, Jadranska 21, 1000 Ljubljana, Slovenia}

\email{jaka.cimpric@fmf.uni-lj.si}

\begin{abstract}
Let $R$ be a commutative ring with $1$ and $n$ a natural number.
We say that a submodule $N$ of $R^n$ is \textit{semiprime} if for every
$f=(f_1,\ldots,f_n) \in R^n$ such that $f_i f \in N$ for $i=1,\ldots,n$ we have $f \in N$. 
Our main result is that every semiprime submodule of $R^n$ is equal to the intersection
of all prime submodules containing it. It follows that every semiprime left ideal
of $M_n(R)$ is equal to the intersection of all prime left ideals that contain it.
For $R=k[x_1,\ldots,x_d]$ where $k$ is an algebraically closed field we can rephrase 
this result as a Nullstellensatz for $M_n(R)$: For every $G_1,\ldots,G_m,F \in M_n(R)$,
$F$ belongs to the smallest semiprime
left ideal of $M_n(R)$ that contains $G_1,\ldots,G_m$ iff for every $a \in k^d$ and
$v \in k^n$ such that $G_1(a)v=\ldots=G_m(a)v=0$ we have $F(a)v=0$.

\end{abstract}

\maketitle

\thispagestyle{empty}

\section{Introduction}

The notions of prime and semiprime submodule of an $R$-module
were introduced in \cite{dauns}.
While the definition of a prime submodule has stood the test of time,
it is fair to say that the definition of a semiprime submodule has
been far less successful. The problem is that a semiprime submodule
need not be equal to the intersection of all prime submodules that contain it (see below).
This has been partially circumvented by the so called radical formulas, 
see e.g.  \cite{rad3}, \cite{rad1}, \cite{rad5}, \cite{rad6}, \cite{rad4}, \cite{rad2},
which require strong assumptions on the ring $R$.

The aim of this paper is to propose a new definition of a semiprime submodule and demonstrate its advantages
in ring theory and algebraic geometry. 
Let $R$ be an commutative ring with $1$ and $n \in \NN$. We say that a submodule $N$ of $R^n$ 
is \textit{semiprime} if for every $\mathbf{f}=(f_1,\ldots,f_n) \in R^n$ such that $f_i \mathbf{f} \in N$ for $i=1,\ldots,n$ we have $\mathbf{f} \in N$.
Although we can  extend this definition in several ways, it is not clear how to extend it to submodules of general $R$-modules.

Submodules that are semiprime in the sense of \cite{dauns} will be called weakly semiprime here.
A submodule $N$ of an $R$-module $M$ is \textit{weakly semiprime} if for every $r \in R$
and every $\mathbf{m} \in M$ such that $r^2 \mathbf{m} \in N$ we have $r\mathbf{m} \in N$.
 We will show that every semiprime submodule of $R^n$ is weakly semiprime
but the converse is false (see Lemma \ref{semiweak}). 

The first advantage of the new definition is that every semiprime submodule 
of $R^n$ is equal to the intersection of all prime submodules that contain it (see Theorem \ref{thm2}).
We consider this our main result.

The second advantage of the new definition is that the natural one-to-one correspondence
between submodules of $R^n$ and left ideals of $M_n(R)$ sends semiprime submodules of  $R^n$
into semiprime left ideals of $M_n(R)$ (see Proposition \ref{thm4}). 
A left ideal of $S=M_n(R)$ is \textit{semiprime} iff $xSx \subseteq I$ implies $x \in I$.
As an application of our main result we show that 
every semiprime left ideal of $M_n(R)$
is equal to the intersection of all prime left ideals that contain it (see Theorem \ref{thm5}).

Our motivation for studying this topic comes from algebraic geometry.
As a special case of our results we obtain a new Nullstellensatz for matrix polynomials (see Theorem \ref{thm6}).
It says that for every matrix polynomials $\G_1,\ldots,\G_m,\F \in M_n(k[x_1,\ldots,x_d])$ 
where $k$ is an algebraically closed field, $\F$ belongs to the smallest semiprime
left ideal of $M_n(k[x_1,\ldots,x_d])$ that contains $\G_1,\ldots,\G_m$ iff for all $\a \in k^d$ and
$\vv \in k^n$ such that $\G_1(\a)\vv=\ldots=\G_m(\a)\vv=0$ we have $\F(\a)\vv=0$.

The main part of the paper is divided into three sections.
In Section \ref{sec2} we give explicit descriptions of prime and maximal submodules of $R^n$.
Section \ref{sec3} contains our main result and Hilbert's Nullstellensatz for $k[x_1,\ldots,x_d]^n$.
Section \ref{sec4} contains the variant of our main result for $M_n(R)$
and one-sided Hilbert's Nullstellensatz for $M_n(k[x_1,\ldots,x_d])$.

\section{Prime submodules of $R^n$}
\label{sec2}

Let $R$ be a commutative ring with $1$ and let $M$ be an $R$ module.
A submodule $N$ of $M$ is \textit{prime} if for every $r \in R$
and every $\m \in M$ such that $r \m \in N$ we 
have that $\m \in N$ or $rM \subseteq N$.
If $N$ is a prime submodule of $M$ then the set
$(N:M):=\{ r \in R \mid r M \subseteq N\}$ is clearly a prime ideal of $R$.
Let $\mathfrak{p}$ be a prime ideal of $R$. We say that a prime submodule $N$ of $M$
is $\mathfrak{p}$-prime if $(N:M)=\mathfrak{p}$. 
If $\mathfrak{p}$ is proper (i.e. $\mathfrak{p} \ne R$)
then every $\mathfrak{p}$-prime submodule of $M$ is also proper.

For $M=R^n$ there are several explicit descriptions of its prime submodules,
see e.g. \cite{class3},  \cite{class2}, \cite{class1}.
Our take on this is Proposition \ref{thm1}.

\begin{prop}
\label{thm1} 
Let $R$ be a commutative ring with $1$, $\mathfrak{p}$ a proper prime ideal of $R$ and $k$ the field of fractions of $R/\mathfrak{p}$.
\begin{enumerate}
\item There is a natural one-to-one correspondence between $\mathfrak{p}$-prime submodules of $R^n$ and proper linear subspaces of $k^n$.
\item For every submodule $N$ of $R^n$, the smallest $\mathfrak{p}$-prime submodule containing $N$ is equal to
$$K(N,\mathfrak{p}):=\{\mathbf{r} \in R^n \mid \exists c \in R \setminus \mathfrak{p} \colon c \mathbf{r} \in N+\mathfrak{p}^n\}.$$
\end{enumerate}
\end{prop}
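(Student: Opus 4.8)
The plan is to route everything through the canonical ring homomorphism $\psi\colon R\to k$, the composite of $R\to R/\mathfrak{p}$ with the inclusion $R/\mathfrak{p}\hookrightarrow k$, together with its coordinatewise extension $\phi\colon R^n\to k^n$, an $R$-module map where $R$ acts on $k^n$ through $\psi$. First I would record the two facts that drive the whole argument: $\ke\phi=\mathfrak{p}^n$ (because $\ke\psi=\mathfrak{p}$), and $\im\phi=(R/\mathfrak{p})^n$, whose $k$-span is all of $k^n$ (it contains the standard basis). I would also note the ``common denominator'' principle: since $k=\operatorname{Frac}(R/\mathfrak{p})$, finitely many elements of $k$ can be written simultaneously as $\psi(a_j)\psi(c)^{-1}$ with a single $c\in R\setminus\mathfrak{p}$.

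For part (1), I claim the maps $V\mapsto\phi^{-1}(V)$ and $N\mapsto\spa_k\phi(N)$ are mutually inverse bijections between proper subspaces of $k^n$ and $\mathfrak{p}$-prime submodules of $R^n$. That $\phi^{-1}(V)$ is $\mathfrak{p}$-prime for proper $V$ is direct: it is a submodule; it is prime because $r\mathbf{m}\in\phi^{-1}(V)$ means $\psi(r)\phi(\mathbf{m})\in V$, so either $\psi(r)$ is a unit and $\phi(\mathbf{m})\in V$, or $\psi(r)=0$, i.e.\ $r\in\mathfrak{p}$, whence $rR^n\subseteq\phi^{-1}(V)$; and $(\phi^{-1}(V):R^n)=\mathfrak{p}$, since $rR^n\subseteq\phi^{-1}(V)$ forces $\psi(r)(R/\mathfrak{p})^n\subseteq V$, which for $\psi(r)\neq 0$ would give $k^n\subseteq V$. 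Conversely, if $N$ is $\mathfrak{p}$-prime then primeness together with $(N:R^n)=\mathfrak{p}$ shows $N$ is saturated ($c\mathbf{r}\in N$, $c\notin\mathfrak{p}$ $\Rightarrow$ $\mathbf{r}\in N$) and that $\mathfrak{p}^n=\mathfrak{p}R^n\subseteq N$; then $N=\phi^{-1}(\spa_k\phi(N))$, the inclusion $\subseteq$ being clear and $\supseteq$ following because if $\phi(\mathbf{r})=\sum_j\lambda_j\phi(\mathbf{n}_j)$ with $\mathbf{n}_j\in N$ then, after clearing a denominator $c\notin\mathfrak{p}$, $\phi(c\mathbf{r}-\sum_j a_j\mathbf{n}_j)=0$, so $c\mathbf{r}\in N+\mathfrak{p}^n=N$, so $\mathbf{r}\in N$. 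Finiteness and properness of $\spa_k\phi(N)$ follow from $N\neq R^n$. That the two maps are inverse to one another is now immediate in one direction, and in the other $\spa_k\phi(\phi^{-1}(V))=V$ again by clearing denominators (every $v\in V$ has $\psi(c)v\in V\cap(R/\mathfrak{p})^n$ for suitable $c\notin\mathfrak{p}$). Monotonicity of both maps is clear, so this is the expected canonical correspondence.

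For part (2) I would show directly that $K(N,\mathfrak{p})=\phi^{-1}(\spa_k\phi(N))$ for an arbitrary submodule $N$, using $\ke\phi=\mathfrak{p}^n$ once more: $\mathbf{r}\in K(N,\mathfrak{p})$ iff $c\mathbf{r}\in N+\mathfrak{p}^n$ for some $c\notin\mathfrak{p}$ iff $\psi(c)\phi(\mathbf{r})\in\phi(N+\mathfrak{p}^n)=\phi(N)$ for some $c\notin\mathfrak{p}$ (apply $\phi$ one way, clear a denominator the other) iff $\phi(\mathbf{r})\in\spa_k\phi(N)$. In particular $K(N,\mathfrak{p})$ is a submodule, and by part (1) it is $\mathfrak{p}$-prime and is the smallest $\mathfrak{p}$-prime submodule containing $N$ whenever $\spa_k\phi(N)\neq k^n$; when $\spa_k\phi(N)=k^n$ one gets $K(N,\mathfrak{p})=R^n$ and no proper $\mathfrak{p}$-prime submodule contains $N$, consistent with the statement under the usual reading of ``smallest''.

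The argument has no single hard step; the care is all in the bookkeeping around $\ke\phi=\mathfrak{p}^n$ and in the repeated common-denominator manoeuvre, which is precisely where the hypothesis that $k$ is the field of fractions of $R/\mathfrak{p}$ (rather than $R/\mathfrak{p}$ itself) is used, and where one must check that replacing $N$ by $N+\mathfrak{p}^n$ changes nothing for $\mathfrak{p}$-prime $N$.
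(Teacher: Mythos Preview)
Your proposal is correct and follows essentially the same route as the paper: both set up the canonical map $R^n\to k^n$, prove the bijection via $V\mapsto\phi^{-1}(V)$ and $N\mapsto\spa_k\phi(N)$, and identify $K(N,\mathfrak{p})$ with $\phi^{-1}(\spa_k\phi(N))$ by clearing denominators. Your write-up is slightly more explicit about $\ke\phi=\mathfrak{p}^n$ and about the degenerate case $\spa_k\phi(N)=k^n$, but the substance is identical.
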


\begin{proof}
Let $\phi \colon R \to k$ be the natural ring homomorphism and let $\bm{\phi}$ be the ring homomorphism
$\phi \times \ldots \times \phi \colon R^n \to k^n$.
We would like to show that the mappings $W \mapsto \bm{\phi}^{-1}(W)$ and $N \to \spa \bm{\phi}(N)$ give a one-to-one correspondence between 
the proper linear subspaces of $k^n$ and $\mathfrak{p}$-prime modules of $R^n$. 

Let $W$ be a proper linear subspace of $k^n$. Clearly, the set $\bm{\phi}^{-1}(W)$ is a submodule of $R^n$. To show that it is prime,
pick $r \in R$ and $\mathbf{m} \in R^n$
such that $r \mathbf{m} \in \bm{\phi}^{-1}(W)$. It follows that $\phi(r) \bm{\phi}(\mathbf{m})=\bm{\phi}(r \mathbf{m}) \in W$.
Since $W$ is a linear subspace of $k^n$, it follows that either $\phi(r)=0$ or $\bm{\phi}(\mathbf{m}) \in W$.
In the first case, $r R^n \subseteq \bm{\phi}^{-1}(W)$ and in the second case $\mathbf{m} \in \bm{\phi}^{-1}(W)$.
To show that $(\bm{\phi}^{-1}(W) \colon R^n)=\mathfrak{p}$ pick any $a \in R$ and note that $a \in (\bm{\phi}^{-1}(W) \colon R^n)$ iff 
$a R^n \subseteq \bm{\phi}^{-1}(W)$ iff $\phi(a) \mathbf{e}_i \in W$ for every $i=1,\ldots,n$ iff $\phi(a)=0$ or $W=k^n$.
Since $W$ is proper, this is equivalent to $\phi(a)=0$, i.e. $a \in \phi^{-1}(0)=\mathfrak{p}$.

The proof that $\bm{\phi}^{-1}(\spa \bm{\phi}(N))=N$ for every prime submodule $N$ will be split into two parts.
Firstly, we will show that $\bm{\phi}^{-1}(\spa \bm{\phi}(N))=K(N,\mathfrak{p})$ for every submodule $N$ of $R^n$.
For every $\mathbf{r} \in \bm{\phi}^{-1}(\spa \bm{\phi}(N))$ there exist $\alpha_1,\ldots,\alpha_m \in k$ and $\mathbf{n}_1,\ldots,\mathbf{n}_m \in N$
such that $\bm{\phi}(\mathbf{r})=\alpha_1 \bm{\phi}(\mathbf{n}_1)+\ldots+\alpha_m \bm{\phi}(\mathbf{n}_m)$. Pick $c_1,\ldots,c_m \in R$ and $c \in R \setminus \mathfrak{p}$
such that $\alpha_i=\phi(c_i) \phi(c)^{-1}$ for every $i=1,\ldots,m$. Clearly, $\sum_{i=1}^m c_i \mathbf{n}_i \in N$ and $c \, \mathbf{r}-\sum_{i=1}^m c_i \mathbf{n}_i \in \bm{\phi}^{-1}(0)=\mathfrak{p}^n$.
Secondly, we claim that $K(N,\mathfrak{p})=N$ if $N$ is a $\mathfrak{p}$-prime submodule. By the definition of $\mathfrak{p}$, we have that $\mathfrak{p} \mathbf{e}_i \subseteq N$ for every $i=1,\ldots,m$, which implies that $\mathfrak{p}^n \subseteq N$. 
It follows that for every $\mathbf{n} \in K(N,\mathfrak{p})$ there exists $c \in R \setminus \mathfrak{p}$ such that $c \mathbf{n} \in N$. Since $N$ is $\mathfrak{p}$-prime, it follows that $\mathbf{n} \in N$. This proves the claim.

It remains to show that $\spa \bm{\phi}(\bm{\phi}^{-1}(W))=W$ for every linear subspace $W$ of $k^n$. 
Pick any $\w \in W$. There exist $c_1,\ldots,c_n \in R$ and $c \in R \setminus \mathfrak{p}$
such that $w_i=\phi(c_i)\phi(c)^{-1}$ for every $i=1,\ldots,n$. 
Write $\mathbf{u}=(c_1,\ldots,c_n)$ and note that $\mathbf{u} \in \bm{\phi}^{-1}(W)$. 
It follows that $\w=\phi(c)^{-1} \bm{\phi}(\mathbf{u}) \in \spa \bm{\phi}(\bm{\phi}^{-1}(W))$. The opposite inclusion is clear.

Claim (2) follows from claim (1) and $\bm{\phi}^{-1}(\spa \bm{\phi}(N))=K(N,\mathfrak{p})$. 
\end{proof}

For every ideal $\mathfrak{a}$ of $R$ and every element $\mathbf{u}=(u_1,\ldots,u_n)\in R^n$ we define a submodule $C_{\mathfrak{a},\mathbf{u}}$ of $R^n$ by
$$C_{\mathfrak{a},\mathbf{u}} := \{(r_1,\ldots,r_n) \in R^n \mid \sum_{i=1}^n r_i u_i  \in \mathfrak{a}\}.$$

\begin{cor}
\label{cor1}
Ler $R$ be a commutative ring with $1$. Let $\mathfrak{p}$ be a proper prime ideal of $R$.
\begin{enumerate}
\item A subset of $R^n$ is a maximal $\mathfrak{p}$-prime submodule iff it is of the form 
$C_{\mathfrak{p},\mathbf{u}}$ where $\mathbf{u} \in R^n \setminus \mathfrak{p}^n$.
\item Every $\mathfrak{p}$-prime submodule of $R^n$ is equal to the intersection of
all maximal $\mathfrak{p}$-prime submodules that contain it.
\end{enumerate}
\end{cor}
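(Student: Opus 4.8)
The plan is to use Proposition \ref{thm1}(1), which identifies $\mathfrak{p}$-prime submodules of $R^n$ with proper linear subspaces of $k^n$ (where $k$ is the fraction field of $R/\mathfrak{p}$), and to transport the whole problem to linear algebra over $k$. First I would check that under the correspondence $W \mapsto \bm{\phi}^{-1}(W)$, the submodule $C_{\mathfrak{p},\mathbf{u}}$ corresponds to a hyperplane: explicitly, $\sum_i r_i u_i \in \mathfrak{p}$ iff $\sum_i \phi(r_i)\phi(u_i)=0$, so $C_{\mathfrak{p},\mathbf{u}} = \bm{\phi}^{-1}(H_{\bm{\phi}(\mathbf{u})})$ where $H_{w}=\{x \in k^n \mid \sum_i x_i w_i = 0\}$ is the hyperplane orthogonal to the nonzero vector $\bm{\phi}(\mathbf{u})$. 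Note $\bm{\phi}(\mathbf{u}) \ne 0$ precisely when $\mathbf{u} \notin \mathfrak{p}^n$, which is exactly the stated restriction. The key point for (1) is then purely linear-algebraic: the maximal proper subspaces of $k^n$ are exactly the hyperplanes, and every nonzero linear functional on $k^n$ has the form $x \mapsto \sum_i x_i w_i$ for a unique-up-to-scalar nonzero $w$. One direction of (1) follows because $C_{\mathfrak{p},\mathbf{u}}$ corresponds to a hyperplane, hence is a maximal $\mathfrak{p}$-prime submodule; the other direction follows because any maximal proper subspace is a hyperplane $H_w$, and by surjectivity of $\phi$ (every element of $k$ is $\phi(c_i)\phi(c)^{-1}$) one can scale $w$ so that $w = \bm{\phi}(\mathbf{u})$ for some $\mathbf{u} \in R^n$, necessarily with $\mathbf{u} \notin \mathfrak{p}^n$.

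For (2), I would again pass through the correspondence. Since $W \mapsto \bm{\phi}^{-1}(W)$ is an inclusion-preserving bijection, it suffices to prove the corresponding statement in $k^n$: every proper linear subspace $W$ is the intersection of all hyperplanes containing it. This is standard linear algebra — pick a complement and a basis, or argue that for any $v \notin W$ there is a hyperplane containing $W$ but not $v$ (extend a functional vanishing on $W$ but not on $v$ from the subspace $W + kv$ to all of $k^n$) — so $W = \bigcap\{H \mid H \text{ hyperplane}, W \subseteq H\}$. Transporting back via $\bm{\phi}^{-1}$ and using that $\bm{\phi}^{-1}$ commutes with intersections, together with part (1) identifying the maximal $\mathfrak{p}$-prime submodules with the $\bm{\phi}^{-1}(H)$, gives the claim.

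The only mild subtlety, rather than a genuine obstacle, is bookkeeping about the surjectivity of $\phi \colon R \to k$ onto the fraction field: elements of $k$ are not literally images of elements of $R$ but quotients $\phi(c_i)\phi(c)^{-1}$, so when I write "$w = \bm{\phi}(\mathbf{u})$" I am implicitly rescaling the defining functional of a hyperplane by a common denominator $\phi(c) \ne 0$, which does not change the hyperplane. This is exactly the manipulation already used in the proof of Proposition \ref{thm1} (in the paragraph showing $\spa\bm{\phi}(\bm{\phi}^{-1}(W)) = W$), so I would simply invoke it. Everything else is a routine translation between the module-theoretic and linear-algebraic pictures.
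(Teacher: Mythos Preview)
Your proposal is correct and follows essentially the same route as the paper's proof: both reduce via Proposition~\ref{thm1} to the linear-algebraic facts that the maximal proper subspaces of $k^n$ are exactly the hyperplanes (identified with the $C_{\mathfrak{p},\mathbf{u}}$ after clearing denominators) and that every proper subspace is an intersection of hyperplanes. Your discussion of the ``mild subtlety'' about clearing denominators is exactly the manipulation the paper carries out when writing $\alpha_i=\phi(u_i)\phi(c)^{-1}$.
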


\begin{proof}
To prove claim (1), write $k$ for the field of fractions of $R/\mathfrak{p}$ and $\phi$ for the natural mapping from $R$ to $k$.
By Proposition \ref{thm1}, a subset of $R^n$ is a maximal $\mathfrak{p}$-prime submodule iff it is of the form
$(\phi \times \ldots \times \phi)^{-1}(W)$ for a maximal proper linear subspace (i.e. a hyperplane) $W$ of $k^n$.
Clearly, $W$ is a hyperplane in $k^n$ iff there exist $\alpha_1,\ldots,\alpha_n \in k$ 
such that $W=\{(z_1,\ldots,z_n) \in k^n \mid \sum_{i=1}^n \alpha_i z_i=0\}$ and $\alpha_i\ne 0$ for at least one $i$. 
Pick $u_1,\ldots,u_n \in R$ and $c \in R \setminus \mathfrak{p}$ such that $\alpha_i=\phi(u_i)\phi(c)^{-1}$ 
for every $i$ and so $u_i \not\in \mathfrak{p}$ for at least one $i$.
For every $(r_1,\ldots,r_n) \in R^n$ we have that $(r_1,\ldots,r_n) \in (\phi \times \ldots \times \phi)^{-1}(W)$ 
iff $(\phi(r_1),\ldots,\phi(r_n)) \in W$ iff $\sum_{i=1}^n \phi(u_i) \phi(r_i)=0$ iff 
$\sum_{i=1}^n u_i r_i \in \mathfrak{p}$ iff $(r_1,\ldots,r_n) \in C_{\mathfrak{p},\mathbf{u}}$.

Claim (2) follows from Proposition \ref{thm1} and the observation that every proper linear subspace of $k^n$ is equal 
to the intersection of all hyperplanes that contain it.
\end{proof}

Our next goal is to characterize maximal submodules of $R^n$.
The relations between maximal and prime submodules of general $R$-modules were already discussed in \cite[Propositions 2-3,2-4,2-5]{maximal}. 

\begin{prop}
\label{cor2}
Let $R$ be a commutative ring with $1$.
\begin{enumerate}
\item A submodule of $R^n$ is maximal iff it is of the form $C_{\mathfrak{m},\mathbf{v}}$ where
$\mathfrak{m}$ is a maximal ideal of $R$ and $\mathbf{v} \in R^n \setminus \mathfrak{m}^n$.
\item If $R$ is a Jacobson ring, then every prime submodule of $R^n$ is equal to the intersection of
all maximal submodules of $R^n$ that contain it.
\end{enumerate}
\end{prop}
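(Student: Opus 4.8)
The plan is to prove claim (1) directly via the first isomorphism theorem, and then to deduce claim (2) by reducing it, using Corollary \ref{cor1}, to a statement about the submodules $C_{\mathfrak{p},\mathbf{u}}$ that is settled by the Jacobson hypothesis.

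For claim (1) I would argue both directions concretely. If $\mathfrak{m}$ is a maximal ideal and $\mathbf{v}\in R^n\setminus\mathfrak{m}^n$, then $C_{\mathfrak{m},\mathbf{v}}$ is the kernel of the $R$-linear map $\psi\colon R^n\to R/\mathfrak{m}$ given by $\psi(r_1,\ldots,r_n)=\sum_{i=1}^n r_iv_i+\mathfrak{m}$; since some $v_j\notin\mathfrak{m}$, its class is a unit of the field $R/\mathfrak{m}$, so $\psi$ is surjective and $R^n/C_{\mathfrak{m},\mathbf{v}}\cong R/\mathfrak{m}$ is simple, i.e.\ $C_{\mathfrak{m},\mathbf{v}}$ is a maximal submodule. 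Conversely, if $N$ is a maximal submodule then $R^n/N$ is simple, hence isomorphic to $R/\mathfrak{m}$ where $\mathfrak{m}=(N\colon R^n)$ is its (necessarily maximal) annihilator; writing $\pi\colon R^n\to R^n/N\cong R/\mathfrak{m}$ for the quotient map and choosing $v_i\in R$ mapping to $\pi(\mathbf{e}_i)$, one gets $N=\ker\pi=C_{\mathfrak{m},\mathbf{v}}$, and $\mathbf{v}\notin\mathfrak{m}^n$ since $\pi$ is onto. (Alternatively the converse follows from Corollary \ref{cor1}(1), once one notes that a maximal submodule is prime — multiplication by any $r$ on the simple module $R^n/N$ is either zero or injective — and is then a maximal $\mathfrak{m}$-prime submodule for $\mathfrak{m}=(N\colon R^n)$.)

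For claim (2), let $N$ be a prime submodule of $R^n$; we may assume $N$ is proper, so $\mathfrak{p}=(N\colon R^n)$ is a proper prime ideal and $N$ is $\mathfrak{p}$-prime. By Corollary \ref{cor1}(1)--(2), $N$ is the intersection of the submodules $C_{\mathfrak{p},\mathbf{u}}$ with $\mathbf{u}\in R^n\setminus\mathfrak{p}^n$ that contain it, so it suffices to write each such $C_{\mathfrak{p},\mathbf{u}}$ as an intersection of maximal submodules of $R^n$. Since $R$ is a Jacobson ring, $\mathfrak{p}$ equals the intersection of all maximal ideals $\mathfrak{m}\supseteq\mathfrak{p}$, hence $\sum_i r_iu_i\in\mathfrak{p}$ iff $\sum_i r_iu_i\in\mathfrak{m}$ for every such $\mathfrak{m}$, i.e.\ $C_{\mathfrak{p},\mathbf{u}}=\bigcap_{\mathfrak{m}}C_{\mathfrak{m},\mathbf{u}}$ with $\mathfrak{m}$ ranging over the maximal ideals containing $\mathfrak{p}$. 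By claim (1) each $C_{\mathfrak{m},\mathbf{u}}$ is either $R^n$ (if $\mathbf{u}\in\mathfrak{m}^n$) or a maximal submodule of $R^n$ (if $\mathbf{u}\notin\mathfrak{m}^n$), so dropping the trivial terms expresses $C_{\mathfrak{p},\mathbf{u}}$, and therefore $N$, as an intersection of maximal submodules; since each of these contains $N$, it is the intersection of all maximal submodules of $R^n$ containing $N$.

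The one step I expect to require some care — and where the Jacobson property is really used a second time — is the degeneration just mentioned: for a fixed $\mathbf{u}\notin\mathfrak{p}^n$ there may be maximal ideals $\mathfrak{m}\supseteq\mathfrak{p}$ with $\mathbf{u}\in\mathfrak{m}^n$, so that $C_{\mathfrak{m},\mathbf{u}}=R^n$ and that term must be discarded, and one must check that enough proper terms survive — equivalently, that the intersection is not vacuously $R^n$. But if every maximal $\mathfrak{m}\supseteq\mathfrak{p}$ had $\mathbf{u}\in\mathfrak{m}^n$, the ideal $\mathfrak{p}+(u_1,\ldots,u_n)$ would lie in every maximal ideal above $\mathfrak{p}$, hence in $\mathfrak{p}$, contradicting $\mathbf{u}\notin\mathfrak{p}^n$. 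Everything else is a routine application of Corollary \ref{cor1} and the first isomorphism theorem.
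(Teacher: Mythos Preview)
Your proof is correct and, for claim~(2), follows exactly the paper's route: use Corollary~\ref{cor1} to write a $\mathfrak{p}$-prime $N$ as an intersection of $C_{\mathfrak{p},\mathbf{u}}$'s, then use the Jacobson hypothesis to write each $C_{\mathfrak{p},\mathbf{u}}=\bigcap_{\mathfrak{m}\supseteq\mathfrak{p}}C_{\mathfrak{m},\mathbf{u}}$. For claim~(1) there is a small difference in presentation: the paper shows that $C_{\mathfrak{m},\mathbf{v}}$ is maximal by arguing that any proper $N\supseteq C_{\mathfrak{m},\mathbf{v}}$ must still have $(N:R^n)=\mathfrak{m}$ and then invokes the maximality among $\mathfrak{m}$-prime submodules from Corollary~\ref{cor1}, and for the converse cites an external reference for ``maximal $\Rightarrow$ prime with $(N:M)$ maximal'' before applying Corollary~\ref{cor1}; your first-isomorphism-theorem argument is a more self-contained alternative that avoids both the citation and the detour through Corollary~\ref{cor1} (your parenthetical alternative is precisely the paper's line). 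Your last paragraph is also a point in your favor: the paper writes $C_{\mathfrak{p},\mathbf{u}}=\bigcap_{\mathfrak{m}\supseteq\mathfrak{p}}C_{\mathfrak{m},\mathbf{u}}$ and asserts the terms are maximal without remarking that some $\mathfrak{m}$ may satisfy $\mathbf{u}\in\mathfrak{m}^n$; you make this explicit and dispose of it, though note that the non-vacuousness you verify is in fact automatic, since $\bigcap_{\mathfrak{m}}C_{\mathfrak{m},\mathbf{u}}=C_{\mathfrak{p},\mathbf{u}}$ is already known to be proper.
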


\begin{proof}
To prove claim (1), pick a maximal submodule $N$ of $M:=R^n$. By \cite[Proposition 2-5]{maximal}
\footnotemark \footnotetext{Here is a summary of the proof.
If $N$ is a maximal submodule of $M$ then $(N:M)$ is a maximal ideal of $R$ (since $M/N$ is isomorphic to $R/(N:M)$ for cyclic $M/N$.)
On the other hand, if $(N:M)$ is a maximal ideal, then $N$ is a prime submodule of $M$ (since $M/N$ is a torsion-free $R/(M:N)$-module.)}, 
$N$ is a prime submodule of $M$ and the set $\mathfrak{m}:=(M:N)$ is a maximal ideal of $R$ 
By Corollary \ref{cor1}, it follows that $N$ is of the form $C_{\mathfrak{m},\u}$ where $\u \in R^n \setminus \mathfrak{m}^n$.

To prove the converse pick a maximal ideal $\mathfrak{m}$ of $R$ and $\u \in R^n \setminus \mathfrak{m}^n$. For every proper submodule $N$
of $R^n$ which contains $C_{\mathfrak{m},\u}$, we have that $R \ne (N:R^n) \supseteq (C_{\mathfrak{m},\u}:R^n)=\mathfrak{m}$. It follows that $(N:R^n)=\mathfrak{m}$.
Since $C_{\mathfrak{m},\u}$ is a maximal $\mathfrak{m}$-prime submodule of $R^n$, it follows that $N=C_{\mathfrak{m},\u}$.

Claim (2) follows from the observation that $\mathfrak{p}=\bigcap_{\mathfrak{p} \subseteq \mathfrak{m}} \mathfrak{m}$
implies that $C_{\mathfrak{p},\u}=\bigcap_{\mathfrak{p} \subseteq \mathfrak{m}} C_{\mathfrak{m},\u}$.
Namely, pick a prime submodule $N$ of $M$ and write $\mathfrak{p}=(N:M)$. By Corollary \ref{cor1}, it follows that 
$$N=\bigcap_{N \subseteq C_{\mathfrak{p},\u}} C_{\mathfrak{p},\u}
= \bigcap_{N \subseteq C_{\mathfrak{p},\u}} \bigcap_{\mathfrak{p} \subseteq \mathfrak{m}} C_{\mathfrak{m},\u}
= \bigcap_{N \subseteq C_{\mathfrak{m},\u}} C_{\mathfrak{m},\u}$$
where $C_{\mathfrak{m},\u}$ are maximal submodules by claim (1).
\end{proof}

\begin{cor}
\label{cor3}
Let $R=k[x_1,\ldots,x_d]$ where $k$ is an algebraically closed field.
\begin{enumerate}
\item For every $\a \in k^d$ and $(v_1,\ldots,v_n)\in k^n \setminus 0^n$ the submodule
$$\{(f_1,\ldots,f_n) \in R^n \mid \sum_{i=1}^n f_i(\a)v_i=0\}$$
is maximal and every maximal submodule of $R^n$ is of this form.
\item Every prime submodule of $R^n$ is equal to the intersection of
all maximal submodules of $R^n$ that contain it.
\end{enumerate}
\end{cor}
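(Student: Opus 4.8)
The plan is to derive both claims from Proposition \ref{cor2} together with Hilbert's Nullstellensatz. First I would recall that, since $k$ is algebraically closed, the weak Nullstellensatz identifies the maximal ideals of $R=k[x_1,\ldots,x_d]$ with the points of $k^d$: every maximal ideal is of the form $\mathfrak{m}_\a=(x_1-a_1,\ldots,x_d-a_d)$ for a unique $\a=(a_1,\ldots,a_d)\in k^d$, and $\mathfrak{m}_\a$ is precisely the kernel of the evaluation homomorphism $\mathrm{ev}_\a\colon R\to k$, $f\mapsto f(\a)$. Consequently, for $g\in R$ one has $g\in\mathfrak{m}_\a$ if and only if $g(\a)=0$.

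For claim (1), I would apply Proposition \ref{cor2}(1): the maximal submodules of $R^n$ are exactly the modules $C_{\mathfrak{m},\u}$ with $\mathfrak{m}$ a maximal ideal of $R$ and $\u\in R^n\setminus\mathfrak{m}^n$. Writing $\mathfrak{m}=\mathfrak{m}_\a$ and $\u=(u_1,\ldots,u_n)$, the defining condition $\sum_{i=1}^n r_i u_i\in\mathfrak{m}_\a$ rewrites, by the previous paragraph, as $\sum_{i=1}^n r_i(\a)u_i(\a)=0$; setting $v_i:=u_i(\a)\in k$ this is $\sum_{i=1}^n r_i(\a)v_i=0$. The condition $\u\notin\mathfrak{m}_\a^n$ says that not all $u_i$ lie in $\mathfrak{m}_\a$, i.e. $(v_1,\ldots,v_n)\ne 0^n$. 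This shows that every maximal submodule of $R^n$ has the form displayed in the statement. Conversely, given $\a\in k^d$ and $(v_1,\ldots,v_n)\in k^n\setminus 0^n$, take $u_i=v_i$ as constant polynomials; then $\u\in R^n\setminus\mathfrak{m}_\a^n$ and $C_{\mathfrak{m}_\a,\u}$ is exactly the submodule in the statement, which is therefore maximal.

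For claim (2), I would invoke Proposition \ref{cor2}(2), for which it suffices to note that $R=k[x_1,\ldots,x_d]$ is a Jacobson ring. This is the standard fact that a finitely generated algebra over a field is Jacobson, again a consequence of the Nullstellensatz. Hence every prime submodule of $R^n$ equals the intersection of the maximal submodules containing it, and by claim (1) these maximal submodules are precisely those exhibited in (1).

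I do not expect a serious obstacle here: the argument is a translation of Proposition \ref{cor2} through the point–maximal-ideal dictionary. The only point requiring care is the bookkeeping in claim (1)—checking that the passage $u_i\leftrightarrow v_i=u_i(\a)$ correctly matches the nondegeneracy conditions on the two sides, namely $\u\notin\mathfrak{m}_\a^n$ against $(v_1,\ldots,v_n)\ne 0^n$—together with recalling that $R$ is Jacobson for claim (2).
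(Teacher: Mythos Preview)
Your proposal is correct and follows essentially the same route as the paper: both derive (1) from Proposition~\ref{cor2}(1) together with the weak Nullstellensatz identification $\mathfrak{m}=\mathfrak{m}_\a$, reducing $\u$ to the constant vector $\mathbf{v}=\u(\a)$, and both obtain (2) from Proposition~\ref{cor2}(2) using that $k[x_1,\ldots,x_d]$ is Jacobson. The paper phrases the reduction in (1) as the one-line identity $C_{\mathfrak{m}_\a,\u}=C_{\mathfrak{m}_\a,\u(\a)}$, whereas you spell out the same computation and the nondegeneracy check more explicitly.
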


\begin{proof}
Let $R$ be as above.
Hilbert's Nullstellensatz implies that $R$ is a Jacobson ring and every maximal ideal of $R$ is of the form 
$\mathfrak{m}_\a=(x_1-a_1,\ldots,x_d-a_d)$ where $\a=(a_1,\ldots,a_d) \in k^d$.
By Proposition \ref{cor2}, every maximal submodule of $R^n$ is of the form $C_{\mathfrak{m}_\a,\u}$
for some $\a \in R^d$ and $\u \in R^d \setminus \mathfrak{m}_\a^d$ and every prime submodule of $R^n$ is equal to the 
intersection of all maximal submodules containing it.

Since $C_{\mathfrak{m}_\a,\u}=C_{\mathfrak{m}_\a,\u(\a)}$, we may restrict ourselves to $\u \in k^n$. 
Thus maximal submodules of $R^n$ are of the form $$C_{\mathfrak{m}_\a,\vv}=\{(f_1,\ldots,f_n) \in R^n \mid \sum_{i=1}^n f_i(a)v_i=0\}$$ 
where $\a \in k^d$ and $\vv =(v_!,\ldots,v_n) \in k^n$ is nonzero. 
\end{proof}

Corollary \ref{cor3} can also be extended to finitely generated algebras over algebraically closed fields,
i.e, to rings of the form $R=k[x_1,\ldots,x_d]/J$.

\section{Semiprime submodules of $R^n$}
\label{sec3}

Let $R$ be a commutative ring with $1$ and $n \in \NN$.  
We say that a submodule $N$ of $R^n$ is \textit{semiprime} if for every 
$\mathbf{f}=(f_1,\ldots,f_n) \in M$ such that $f_i \mathbf{f} \in N$
for $i=1,\ldots,n$ we have $\mathbf{f} \in N$.

\begin{lem}
\label{pissemip}
Every prime submodule of $R^n$ is semiprime.
\end{lem}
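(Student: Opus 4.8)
The plan is to unwind both definitions and to exploit that $R^n$ is free on the standard basis $\mathbf{e}_1,\ldots,\mathbf{e}_n$. Let $N$ be a prime submodule of $R^n$ and let $\mathbf{f}=(f_1,\ldots,f_n)\in R^n$ be such that $f_i\mathbf{f}\in N$ for every $i=1,\ldots,n$; the goal is to show $\mathbf{f}\in N$. The crucial remark is the tautological decomposition $\mathbf{f}=\sum_{i=1}^n f_i\mathbf{e}_i$, which reduces the problem to showing that $f_i\mathbf{e}_i\in N$ for each $i$.

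First I would apply the definition of a prime submodule to each of the $n$ hypotheses $f_i\mathbf{f}\in N$ separately. For a fixed $i$, primeness of $N$ forces either $\mathbf{f}\in N$ or $f_i R^n\subseteq N$. In the first case we are already done, so we may assume $\mathbf{f}\notin N$, and then the second alternative must hold for every index, i.e. $f_i R^n\subseteq N$ for all $i=1,\ldots,n$.

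Second, I would specialise this to the basis vectors: $f_i\mathbf{e}_i\in f_i R^n\subseteq N$ for each $i$, hence $\mathbf{f}=\sum_{i=1}^n f_i\mathbf{e}_i$ is a sum of elements of $N$ and therefore lies in $N$, contradicting $\mathbf{f}\notin N$. This contradiction shows $\mathbf{f}\in N$, which is exactly the semiprimeness condition.

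I do not expect any real obstacle here; the only points requiring a little care are that the disjunction in the definition of \emph{prime} has to be invoked once for each index $i$ (so that the ``bad'' alternative $f_iR^n\subseteq N$ is obtained simultaneously for all $i$ under the single assumption $\mathbf{f}\notin N$), and that the argument uses the freeness of $R^n$ in an essential way through the identity $\mathbf{f}=\sum_{i=1}^n f_i\mathbf{e}_i$.
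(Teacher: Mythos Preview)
Your argument is correct and matches the paper's proof essentially line for line: assume $\mathbf{f}\notin N$, use primeness on each relation $f_i\mathbf{f}\in N$ to force $f_iR^n\subseteq N$, then conclude $\mathbf{f}=\sum_i f_i\mathbf{e}_i\in N$. There is nothing to add.
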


\begin{proof}
Suppose that $P$ is a prime submodule of $M=R^n$.
Pick an element $\mathbf{f} \in M$ such that $f_i \mathbf{f} \in P$ for all $i$. If $\mathbf{f} \not\in P$,
then by the definition of a prime submodule $f_i M \subseteq P$ for all $i$. It follows that $f_i \mathbf{e}_i \in P$
for all $i$ where $\mathbf{e}_i$ is the $i$-th standard vector in $M$. Therefore $\mathbf{f}=\sum_{i=1}^n f_i \mathbf{e}_i \in P$.
\end{proof}

Let us compare our definition of a semiprime submodule with the usual definition, see e.g. \cite{semiprime}.
A submodule $N$ of an $R$-module $N$ is \textit{weakly semiprime} (i.e. semiprime in the usual sense)
if for every $r \in R$ and every $\m \in M$ such that $r^2 \m \in N$ we have $r \m \in N$.

\begin{lem} 
\label{semiweak}
Every semiprime submodule of $R^n$ is weakly semiprime but the converse is false in general.
\end{lem}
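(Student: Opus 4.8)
The plan is to establish the two assertions of Lemma~\ref{semiweak} separately: first that every semiprime submodule of $R^n$ is weakly semiprime, and then that there is a weakly semiprime submodule of some $R^n$ which is not semiprime.

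For the first assertion, let $N$ be a semiprime submodule of $R^n$, fix $r \in R$ and $\m = (m_1,\ldots,m_n) \in R^n$ with $r^2 \m \in N$, and put $\mathbf{f} = r\m$. Then $f_i = r m_i$, so $f_i \mathbf{f} = r m_i \cdot r \m = m_i (r^2 \m) \in N$ for each $i$ (using that $N$ is a submodule and $r^2\m \in N$). By the defining property of a semiprime submodule, $\mathbf{f} = r\m \in N$, which is exactly the weakly semiprime condition. This direction is completely routine.

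For the converse I would exhibit an explicit counterexample, and the natural place to look is $n \geq 2$ over a ring with a nonzero nilpotent, or even a field: the point is that the weakly semiprime condition only constrains $N$ along "scalar" directions $r\m$, whereas the semiprime condition constrains it along every vector whose coordinates all multiply it back into $N$. Concretely, over a field $k$ take $R = k$ and $n = 2$, and let $N = 0$; then $N$ is trivially weakly semiprime (if $r^2\m = 0$ in $k^2$ then $r = 0$ or $\m = 0$, so $r\m = 0 \in N$), but $N$ is not semiprime: the vector $\mathbf{f} = (1,0)$ satisfies $f_1\mathbf{f} = (1,0) \notin N$, so already the hypothesis $f_i\mathbf{f}\in N$ fails — hence one must instead pick $\mathbf{f}$ with $f_1 f_2 = 0$ but $\mathbf{f}\neq 0$, which over a field forces a nontrivial zero divisor pattern. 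So over a field the cleaner choice is $R$ not a domain, e.g. $R = k[t]/(t^2)$, $n = 1$: here $N = 0 \subseteq R$, $\mathbf{f} = t$ gives $f_1 \mathbf{f} = t^2 = 0 \in N$ yet $\mathbf{f} = t \notin N$, while $N = 0$ is weakly semiprime iff $r^2 m = 0 \Rightarrow rm = 0$, which fails for $r = m = t$ — so $R = k[t]/(t^2)$ does \emph{not} work for $n=1$. The robust counterexample is therefore $n = 2$ over a domain: take $R = \ZZ$ (or any PID), $n = 2$, and $N = \ZZ(1,1) + 4\ZZ^2$. One checks $N$ is weakly semiprime by a direct coordinate computation, while $\mathbf{f} = (2,2) \notin N$ has $f_1\mathbf{f} = f_2\mathbf{f} = (4,4) \in N$, so $N$ is not semiprime.

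The main obstacle is purely the construction of the counterexample: one must find a submodule for which the weakly semiprime condition is easy to verify (ideally a finite check) yet the semiprime condition visibly fails via a single witness vector $\mathbf{f}$ all of whose coordinate multiples land in $N$ while $\mathbf{f}$ itself does not. The forward implication carries no difficulty. Once a suitable $N$, $R$, $n$ are fixed, verifying weak semiprimeness may require a short case analysis on the residues of the entries modulo the relevant ideal, which I would carry out explicitly but which involves no conceptual content.
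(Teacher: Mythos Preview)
Your forward implication is correct and essentially identical to the paper's: set $\mathbf{f}=r\m$ and observe $f_i\mathbf{f}=m_i(r^2\m)\in N$.

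The counterexample, however, is wrong on both counts. First, your witness $\mathbf{f}=(2,2)$ lies in $N=\ZZ(1,1)+4\ZZ^2$, since $(2,2)=2\cdot(1,1)$. More seriously, $N$ is \emph{not} weakly semiprime: note that $N=\{(a,b)\in\ZZ^2 : a\equiv b\pmod 4\}$, and with $r=2$, $\m=(1,0)$ one has $r^2\m=(4,0)\in N$ but $r\m=(2,0)\notin N$. So this $N$ fails to separate the two notions. (The vector $(2,0)$ would have been a valid witness that $N$ is not semiprime, but that is moot once weak semiprimeness fails.)

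The difficulty you are running into is structural: any example built from congruences modulo a power $p^2$ will typically fail weak semiprimeness via $r=p$. The paper's example avoids this by working over $R=k[x,y]$ and taking $N=\{t(x,y):t\in\langle x,y\rangle\}\subseteq R^2$, the submodule generated by $(x^2,xy)$ and $(xy,y^2)$. Then $(x,y)\notin N$ while $x(x,y),y(x,y)\in N$, so $N$ is not semiprime; weak semiprimeness is checked by noting that $r^2(m_1,m_2)=s(x,y)$ with $s\in\langle x,y\rangle$ forces $r^2\mid s$ (since $r^2\mid sx$ and $r^2\mid sy$ in the UFD $k[x,y]$), and then primeness of $\langle x,y\rangle$ gives $r(m_1,m_2)\in N$. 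The essential feature is a prime ideal that is not principal, which your $\ZZ$-examples cannot supply.
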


\begin{proof}
Suppose that $N$ is a semiprime submodule of $R^n$. Pick $r \in R$ and $\mathbf{m} \in R^n$ such that $r^2 \mathbf{m} \in N$.
For $\mathbf{n}=r\mathbf{m}$ we have $n_i \mathbf{n}= m_i(r^2 \mathbf{m}) \in N$ for all $i=1,\ldots,n$. 
By the definition of a semiprime submodule, it follows that $\mathbf{n} \in N$. Therefore $N$ is  weakly semiprime.

Let $R=k[x,y]$ where $k$ is a field and let $N$ be the submodule of $R^2$ generated by $(x^2,xy)$ and $(xy,y^2)$.
Clearly, $N$ consists of all elements $t(x,y)$ where $t$ belongs to the ideal $\langle x,y \rangle$ of $R$ generated by $x$ and $y$. 
Since $(x,y) \not\in N$ while $x(x,y) \in N$ and $y(x,y) \in N$, $N$ is not semiprime.
To show that $N$ is weakly semiprime, pick $r \in R$ and $(m_1,m_2) \in R^2$ such that $r^2(m_1,m_2) \in N$.
Pick also $s \in \langle x,y \rangle$ such that $r^2(m_1,m_2)=s(x,y)$. 
Note that $r^2$ divides both $sx$ and $sy$ which implies that $r^2$ divides $s$, i.e. $s=kr^2$ for some $k \in R$. 
Since $\langle x,y \rangle$ is a maximal ideal and $kr^2 \in \langle x,y \rangle$, it follows that $kr \in \langle x,y \rangle$. 
Therefore $r(m_1,m_2)=kr(x,y)$ belongs to $N$.
\end{proof}

For every submodule $N$ of an $R$-module $M$ we define its \textit{radical} $\sqrt{N}$ as the intersection of all
prime submodules of $M$ that contain $N$. Since every $\mathfrak{p}$-prime submodule that contains $N$ also contains $K(N,\mathfrak{p})$
$$\sqrt{N}=\bigcap_{\mathfrak{p} \in \mathrm{Spec}(R)} K(N,\mathfrak{p})$$
If $\sqrt{N}=N$ we say that $N$ is a \textit{radical submodule}.

\begin{thm}
\label{thm2}
Every semiprime submodule of $R^n$ is a radical submodule. More precisely, for every submodule $N$ of $R^n$,
its radical $\sqrt{N}$ is the smallest semiprime submodule that contains $N$. 
\end{thm}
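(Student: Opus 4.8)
The plan is to split the claim into an easy half and a hard half, the hard half being that a semiprime submodule equals its radical.

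For the easy half: for any submodule $N$ one trivially has $N\subseteq\sqrt N$, and $\sqrt N$ is semiprime, since whenever $\mathbf f\in R^n$ satisfies $f_i\mathbf f\in\sqrt N$ for all $i$, then $f_i\mathbf f$ lies in every prime submodule $P\supseteq N$, hence $\mathbf f\in P$ by Lemma \ref{pissemip}, and therefore $\mathbf f\in\sqrt N$. Since $\sqrt{\cdot}$ is inclusion-monotone, once we know that every semiprime submodule $M$ satisfies $M=\sqrt M$, it follows that any semiprime $M\supseteq N$ satisfies $\sqrt N\subseteq\sqrt M=M$; thus $\sqrt N$ is indeed the smallest semiprime submodule containing $N$, and the first assertion of the theorem is precisely the identity $M=\sqrt M$ for semiprime $M$. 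So the whole proof reduces to showing: \emph{if $M$ is semiprime and $\mathbf r\in R^n\setminus M$, then $\mathbf r\notin\sqrt M$} (with $M=R^n$ a trivial case).

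For the hard half, recall from the excerpt that $\sqrt M=\bigcap_{\mathfrak p}K(M,\mathfrak p)$ with $K(M,\mathfrak p)=\{\mathbf s\in R^n:\exists\,c\in R\setminus\mathfrak p,\ c\mathbf s\in M+\mathfrak p^n\}$ (Proposition \ref{thm1}); so it suffices to produce one prime $\mathfrak p$ of $R$ with $c\mathbf r\notin M+\mathfrak p^n$ for all $c\in R\setminus\mathfrak p$. I would take $\mathfrak p$ to be a minimal prime over the transporter ideal $\mathfrak b:=(M:\mathbf r)=\{a\in R:a\mathbf r\in M\}$. This ideal is proper (as $\mathbf r\notin M$), and --- this is where the hypothesis is used --- it is \emph{radical}: $M$ is weakly semiprime by Lemma \ref{semiweak}, so $a^2\mathbf r\in M$ forces $a\mathbf r\in M$. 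Since $\mathfrak b$ is radical and $\mathfrak p$ is minimal over it, a standard fact about minimal primes gives: every $x\in\mathfrak p$ admits $s\in R\setminus\mathfrak p$ with $sx\in\mathfrak b$. Now suppose $c\mathbf r=\mathbf m+\mathbf q$ with $c\in R\setminus\mathfrak p$, $\mathbf m\in M$ and $\mathbf q=(q_1,\ldots,q_n)\in\mathfrak p^n$; choose $s_i\in R\setminus\mathfrak p$ with $s_iq_i\in\mathfrak b$ and set $s=s_1\cdots s_n\in R\setminus\mathfrak p$, so that $sq_i\in\mathfrak b$ and hence $sq_i\mathbf r\in M$ for every $i$. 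Since $\mathbf q=c\mathbf r-\mathbf m$, we get $sq_i\mathbf q=c(sq_i\mathbf r)-sq_i\mathbf m\in M$ for every $i$; thus $s\mathbf q$, whose $j$-th coordinate is $sq_j$, satisfies $(sq_j)(s\mathbf q)=s(sq_j\mathbf q)\in M$ for every $j$, so semiprimeness of $M$ yields $s\mathbf q\in M$. Therefore $sc\mathbf r=s\mathbf m+s\mathbf q\in M$, i.e. $sc\in\mathfrak b\subseteq\mathfrak p$, which is impossible since $s,c\notin\mathfrak p$. Hence $\mathbf r\notin K(M,\mathfrak p)$, and we are done.

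The main obstacle is choosing $\mathfrak p$ correctly: we need a prime over $\mathfrak b$ ``tight enough'' around $\mathfrak b$ that each coordinate $q_i$ of the error term $\mathbf q$ can be pulled into $\mathfrak b$ after multiplication by a suitable $s\notin\mathfrak p$, and this is exactly what minimality of $\mathfrak p$ over the \emph{radical} ideal $\mathfrak b$ buys. It is also the step where it is essential that a semiprime submodule is weakly semiprime (so that $\mathfrak b$ is radical); everything else is routine bookkeeping with the definitions.
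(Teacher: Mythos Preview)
Your argument is correct, and it is genuinely different from the paper's.

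The paper proceeds by induction on $n$: given $\mathbf f\notin N$ with $N$ semiprime, it first handles the case where some coordinate hyperplane $C_{\mathfrak p,\mathbf e_i}$ already separates $\mathbf f$ from $N$; if not, it produces $\mathbf g\in N$ and an index $i$ with $g_i\mathbf f\notin N$, forms the element $g_i\mathbf f-f_i\mathbf g$ (whose $i$-th coordinate vanishes), and applies the induction hypothesis to its image in $R^{n-1}$ to obtain a prime $\mathfrak p$ with $\mathbf f\notin K(N,\mathfrak p)$. Your route avoids induction entirely: you pick $\mathfrak p$ at the outset as a minimal prime over the transporter ideal $\mathfrak b=(M:\mathbf r)$, observe that $\mathfrak b$ is radical because $M$ is weakly semiprime (Lemma~\ref{semiweak}), and then exploit the fact that $\mathfrak b R_{\mathfrak p}=\mathfrak p R_{\mathfrak p}$ to kill the ``error term'' $\mathbf q\in\mathfrak p^{\,n}$ after multiplying by a suitable $s\notin\mathfrak p$. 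This is shorter and identifies the separating prime explicitly; the price is that it leans on standard commutative algebra (existence of minimal primes, behaviour of radical ideals under localization), whereas the paper's inductive reduction is entirely self-contained. One small suggestion: when you invoke the ``standard fact'' that every $x\in\mathfrak p$ has $sx\in\mathfrak b$ for some $s\notin\mathfrak p$, it is worth stating the one-line justification ($\mathfrak b$ radical implies $\mathfrak b R_{\mathfrak p}$ radical, hence $\mathfrak b R_{\mathfrak p}=\sqrt{\mathfrak b R_{\mathfrak p}}=\mathfrak p R_{\mathfrak p}$), since the usual version of that fact only gives $sx^{k}\in\mathfrak b$.
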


\begin{proof}
The second part of the theorem follows from the first part and Lemma \ref{pissemip}.
To prove the first part, pick a semiprime submodule $N$ of $R^n$ and 
an element $\mathbf{f}$ of $R^n$ that does not belong to $N$.
We want to construct a prime submodule of $R^n$  that contains $N$ but avoids $\mathbf{f}$.
The proof will be divided into several steps.

\smallskip

\textbf{Step 1:} For every $t \in \NN$ there is $i \in \{1,\ldots,n\}$ such that $f_i^{2^t} \mathbf{f} \not\in N$.

\smallskip

If $f_i^{2^t} \mathbf{f} \in N$ for all $i$ then $(f_i^{2^{t-1}} \mathbf{f})_j f_i^{2^{t-1}} \mathbf{f}=f_j f_i^{2^t}\mathbf{f} \in N$ for all $i,j$.
Since $N$ is semiprime, it follows that $f_i^{2^{t-1}} \mathbf{f} \in N$ for every $i$. By induction, we get $\mathbf{f} \in N$.

\smallskip

If there exists $i \in \{1,\ldots,n\}$ and a prime ideal $\mathfrak{p}$ such that 
$N \subseteq C_{\mathfrak{p},\mathbf{e}_i}$ and $\mathbf{f} \not\in C_{\mathfrak{p},\mathbf{e}_i}$
then the conclusion of the theorem is true. Suppose now that this is false. 
Then every prime ideal of $R$ which contains the $i$-th component $g_i$ of every element $\mathbf{g} \in N$ also contains $f_i$. 
It follows that for every $i$ there exists $t_i\in \NN$ such that $f_i^{t_i} \in \sum_{\mathbf{g} \in N} Rg_i$.

\smallskip

\textbf{Step 2:} There exists $\mathbf{g} \in N$ and $i \in \{1,\ldots,n\}$ such that $g_i \mathbf{f} \not\in N$.

\smallskip

 If $g_i \mathbf{f} \in N$ for every $i$
and every $\mathbf{g} \in N$ then $f_i^{t_i} \mathbf{f} \in \sum_{\mathbf{g} \in N} Rg_i \mathbf{f} \subseteq N$ for every $i$. By Step 1, it follows that $\mathbf{f} \in N$.

\smallskip

Let $\kappa_i \colon R^{n-1} \to R^n$ be the embedding $(x_1,\ldots,x_{i-1},x_i,\ldots,x_{n-1}) \mapsto (x_1,\ldots,x_{i-1},0,x_i,\ldots,x_{n-1})$.
Clearly,  $N' := \kappa_i^{-1}(N)$ is a semiprime submodule of $R^{n-1}$.
Since $g_i \mathbf{f} \not\in N$ and $\mathbf{g} \in N$, it follows that $g_i \mathbf{f}-f_i \mathbf{g} \not\in N$ and so $\mathbf{f}':=\kappa_i^{-1}(g_i \mathbf{f}-f_i \mathbf{g}) \not\in N'$. By the induction 
hypothesis, there exists a prime ideal $\mathfrak{p}$ of $R$ such that $\mathbf{f}'\not\in K(N',\mathfrak{p})$. 

\smallskip

\textbf{Step 3:} $\mathbf{f} \not\in K(N,\mathfrak{p})$.

\smallskip

If $\mathbf{f} \in K(N,\mathfrak{p})$ then $c \,\mathbf{f} =\mathbf{n}+\mathbf{s}$ for some $c \in R \setminus \mathfrak{p}$, $\mathbf{n} \in N$ and $\mathbf{s} \in \mathfrak{p}^n$.
It follows that $c(g_i \mathbf{f}-f_i \mathbf{g})=(g_i \mathbf{n}-n_i \mathbf{g})+(g_i \mathbf{s}-s_i \mathbf{g}) \in N+\mathfrak{p}^n$ where all brackets have
the $i$-th component equal to zero. Applying $\kappa_i^{-1}$ we get $c \mathbf{f}' \in N' + \mathfrak{p}^{n-1}$. Therefore $\mathbf{f}' \in K(N',\mathfrak{p})$, a contradiction.
\end{proof}

For every $\mathbf{g}=(g_1,\ldots,g_n) \in k[x_1,\ldots,x_d]^n$, $\vv=(v_1,\ldots,v_n) \in k^n$ and $a \in k^d$ we write $\mathbf{g}(a)\vv=\sum_{i=1}^n g_i(a)v_i$.

\begin{thm}
\label{thm3}
Let $R=k[x_1,\ldots,x_d]$ where $k$ is an algebraically closed field. 
For every $\mathbf{g}_1,\ldots,\mathbf{g}_m,\mathbf{f} \in R^n$ the following are equivalent.
\begin{enumerate}
\item $\mathbf{f}$ belongs to the smallest semiprime submodule of $R^n$ that contains $\mathbf{g}_1,\ldots,\mathbf{g}_m$.
\item For every $a \in k^d$ and  $\vv \in k^n$ such that $\mathbf{g}_1(a)\vv=\ldots=\mathbf{g}_m(a)\vv=0$ we have $\mathbf{f}(a)\vv=0$.
\end{enumerate}
\end{thm}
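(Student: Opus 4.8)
The plan is to assemble Theorem \ref{thm2} and Corollary \ref{cor3}; no new ideas are needed. Write $N := R\mathbf{g}_1 + \cdots + R\mathbf{g}_m \subseteq R^n$ for the submodule generated by $\mathbf{g}_1,\ldots,\mathbf{g}_m$. By Theorem \ref{thm2}, the smallest semiprime submodule of $R^n$ containing $N$ is the radical $\sqrt{N}$, that is, the intersection of all prime submodules of $R^n$ that contain $N$. Hence statement (1) is literally the assertion $\mathbf{f} \in \sqrt{N}$, and everything reduces to identifying $\sqrt{N}$ concretely.

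The next step is to replace prime submodules by maximal ones. Since $k$ is algebraically closed, $R = k[x_1,\ldots,x_d]$ is a Jacobson ring, so Corollary \ref{cor3}(2) says that every prime submodule of $R^n$ is the intersection of the maximal submodules containing it; consequently $\sqrt{N}$ equals the intersection of all maximal submodules of $R^n$ that contain $N$. By Corollary \ref{cor3}(1), each such maximal submodule has the form $M_{a,\vv} := \{\mathbf{h} \in R^n \mid \mathbf{h}(a)\vv = 0\}$ for some $a \in k^d$ and some nonzero $\vv \in k^n$, and conversely every pair $(a,\vv)$ with $\vv \neq 0$ produces one.

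Finally, since $M_{a,\vv}$ is a submodule, the inclusion $N \subseteq M_{a,\vv}$ holds iff $\mathbf{g}_j \in M_{a,\vv}$ for $j=1,\ldots,m$, i.e.\ iff $\mathbf{g}_1(a)\vv = \cdots = \mathbf{g}_m(a)\vv = 0$; and $\mathbf{f} \in M_{a,\vv}$ iff $\mathbf{f}(a)\vv = 0$. Therefore $\mathbf{f} \in \sqrt{N}$ iff for every $a \in k^d$ and every nonzero $\vv \in k^n$ with $\mathbf{g}_1(a)\vv = \cdots = \mathbf{g}_m(a)\vv = 0$ one has $\mathbf{f}(a)\vv = 0$. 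Admitting $\vv = 0$ as well changes nothing, because then $\mathbf{f}(a)\vv = 0$ trivially, so this is exactly condition (2), which completes the proof. There is no real obstacle here: the only point calling for a moment's care is that $\sqrt{N}$ is an intersection over prime submodules with arbitrary associated prime $\mathfrak{p} \in \mathrm{Spec}(R)$, and one must reduce each of them to maximal submodules — but Corollary \ref{cor3}(2) already carries this out uniformly.
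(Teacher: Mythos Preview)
Your proof is correct and follows essentially the same approach as the paper: both combine Theorem~\ref{thm2} (the smallest semiprime submodule is the radical) with Corollary~\ref{cor3} (which identifies the relevant prime/maximal submodules as the $C_{\mathfrak{m}_a,\vv}$) to obtain the equivalence. The paper phrases it as a cycle of implications among three reformulations, while you compute $\sqrt{N}$ directly, but the content is the same.
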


\begin{proof}
Consider the following claims.
\begin{enumerate}
\item[(i)] Every semiprime submodule of $R^n$ that contains $\mathbf{g}_1,\ldots,\mathbf{g}_m$, also contains $\mathbf{f}$.
\item[(ii)] Every prime submodule of $R^n$ that contains $\mathbf{g}_1,\ldots,\mathbf{g}_m$, also contains $\mathbf{f}$.
\item[(iii)] Every submodule of the form $C_{\mathfrak{m}_a,v}$ 
that contains $\mathbf{g}_1,\ldots,\mathbf{g}_m$, also contains $\mathbf{f}$.
\end{enumerate}
Claim (i) rephrases claim (1) and claim (iii) rephrases claim (2). 
By Lemma \ref{pissemip}, (i) implies (ii). By Theorem \ref{thm2}, (ii) implies (i). 
Since each $C_{\mathfrak{m}_a,v}$ is prime, (ii) implies (iii).
By Corollary \ref{cor3}, (iii) implies (ii).
\end{proof}

\section{Left prime and semiprime ideals of $M_n(R)$}
\label{sec4}

Let $R$ be a ring with $1$ and $n \in \NN$. We drop the assumption that $R$ is commutative because it does not simplify the proofs in this section.

There exists a one-to-one correspondence between submodules of $R^n$ and left ideals of $M_n(R)$. 
For every left ideal $I$ of $M_n(R)$ and every submodule $N$ of $R^n$ we define the sets
$$\Phi(I)=\{\mathbf{m} \in R^n \mid \left[ \begin{array}{cc} \mathbf{m} \\ \mathbf{0} \\[-1mm] \vdots \\ \mathbf{0} \end{array} \right] \in I\}
\quad \text{ and } \quad
\Psi(N)=\left[ \begin{array}{cc} N  \\ \vdots \\[2mm] N \end{array} \right].$$
Note that $\Phi(I)$ is a submodule of $R^n$ which consists of all rows of all elements of $I$
and that $\Psi(N)$ is a left ideal of $M_n(R)$ which consists of all matrices that have all rows in $N$.
A short computation shows that $\Psi(\Phi(I))=I$ for every  $I$ and $\Phi(\Psi(N))=N$ for every $N$.

\begin{rem}
\label{rem3}
For every left ideal $\mathfrak{a}$ of $R$ and every $\mathbf{u}=(u_1,\ldots,u_n) \in R^n$, the submodule
$$C_{\mathfrak{a},\mathbf{u}}=\{\mathbf{r} \in R^n \mid \sum_{i=1}^n r_i u_i \in \mathfrak{a}\}$$
corresponds to the left ideal 
$$D_{\mathfrak{a},\mathbf{u}}=\{\X \in M_n(R) \mid \X \mathbf{u} \in \mathfrak{a}^n\}.$$
\end{rem}

\begin{rem}
\label{rem4}
Since $\Phi$ and $\Psi$ preserve inclusions, maximal submodules of $R^n$ correspond to maximal left ideals of $M_n(R)$.
By \cite[Theorem 1.2]{maximal2}
\footnotemark \footnotetext{Here is a summary of the proof. If $I$ is a maximal left ideal of $M_n(R)$ then, by Morita theory, 
the simple $M_n(R)$-module $M_n(R)/I$ is isomorphic to $E^n$ for some simple $R$-module $E$. Let $\mathfrak{m}$ be a maximal ideal of $R$ such that 
$E \cong R/\mathfrak{m}$ and let $u+\mathfrak{m}^n$ be the image of $1+I$ under the map $M_n(R)/I \cong (R/\mathfrak{m})^n \cong R^n/\mathfrak{m}^n$.},
every maximal left ideal of $M_n(R)$ is of the form
$D_{\mathfrak{m},\mathbf{u}}$ where $\mathfrak{m}$ is a maximal left ideal of $R$ and $\mathbf{u} \in R^n \setminus \mathfrak{m}^n$.
Therefore, every maximal submodule of $R^n$ is of the form $C_{\mathfrak{m},\mathbf{u}}$ for maximal $\mathfrak{m}$ and
$\mathbf{u} \in R^n \setminus \mathfrak{m}^n$. This is a noncommutative version of Proposition \ref{cor2}.
\end{rem}

A submodule $N$ of a left $R$-module $M$ is \textit{prime} if for every $r \in R$ and $\mathbf{m} \in M$ such that
$r R \mathbf{m} \subseteq N$ we have $\mathbf{m} \in N$ or $rM \subseteq N$.
A submodule $N$ of a left $R$-module $R^n$ is \textit{semiprime} if for every $\mathbf{m}=(m_1,\ldots,m_n) \in R^n$ 
such that $m_i R \mathbf{m} \subseteq N$ for $i=1,\ldots,n$ we have $\mathbf{m} \in N$.
A left ideal $I$ of $R$ is \textit{prime} if it is a prime submodule of $R$, i.e. if $xRy \subseteq I$ implies that $xR \subseteq I$ or $y \in I$.
Similarly, $I$ is \textit{semiprime} if it is a semiprime submodule of $R$, i.e. if $xRx \subseteq I$ implies  $x \in I$.

\begin{prop}
\label{thm4}
In the bijective correspondence from above prime submodules of $R^n$ correspond to prime left ideals of $M_n(R)$
and semiprime submodules of $R^n$ correspond to semiprime left ideals of $M_n(R)$.
\end{prop}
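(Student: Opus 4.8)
The plan is to transfer the defining conditions of the three module-theoretic notions across the bijection $\Phi,\Psi$ by direct computation, using the row description: $\Psi(N)$ is exactly the set of matrices whose every row lies in $N$, and $\Phi(I)$ is the set of all rows of all matrices in $I$. The key elementary observation I would establish first is a dictionary between scalar/multiplication operations: for a matrix $\X \in M_n(R)$ with rows $\X^{(1)},\ldots,\X^{(n)} \in R^n$ and a matrix $\Y \in M_n(R)$, the rows of $\Y\X$ are $R$-linear combinations of the rows of $\X$ (with coefficients the entries of $\Y$), and conversely every row $r\,\X^{(j)}$, $r \in R$, and every sum of rows is achieved by a suitable left multiplier $\Y$. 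In particular, for $\X \in M_n(R)$ the left ideal $M_n(R)\X$ has row set $\sum_j R\,\X^{(j)}$, the submodule of $R^n$ generated by the rows of $\X$. This is exactly the bookkeeping needed to match $xSx \subseteq I$ with the condition "$m_i\mathbf{m} \in N$ for all $i$" once we pick $\mathbf{m}$ to be one row of $x$.

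First I would treat the prime case. Let $P = \Phi(I)$ for a left ideal $I$ of $S := M_n(R)$. Suppose $I$ is prime and $r\mathbf{m} \in P$ for some $r \in R$, $\mathbf{m} \in R^n$ (in the commutative case $rR\mathbf{m} = Rr\mathbf{m}$, so this is the same condition). Let $x$ be the matrix with first row $\mathbf{m}$ and other rows zero, and let $y = rE_{11}$ (scalar $r$ in the top-left corner). Then $ySy \subseteq \Psi(P) = I$ iff $rR\,r\mathbf{m} \subseteq P$, which holds since $r\mathbf{m}\in P$ and $P$ is a submodule; wait — I need the prime condition $rRx$-type statement, so more carefully: I would show $y S x \subseteq I$ translates to $rR\mathbf{m}\subseteq P$ being insufficient and instead set up $x$ with row $\mathbf{m}$, $y$ the scalar matrix $r\cdot\id$, observe $yx$ has rows $r\mathbf{m}$ hence lies in $I$, and then the primeness of $I$ applied to $y,x$ (after checking $ySx\subseteq I$, which reduces to $rR\mathbf{m}\subseteq P$, true as $P$ is a submodule containing $r\mathbf{m}$... ) forces $x \in I$ or $yS \subseteq I$; the first gives $\mathbf{m}\in P$, the second gives $r\cdot\id \in S$ mapping everything in, i.e. $rR^n \subseteq P$, hence $rM \subseteq P$. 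The converse direction is symmetric: given $P$ prime, take $x,y \in S$ with $ySx \subseteq I$; reading rows, every row of every $y's'x$ lies in $P$; one checks this yields $r R \mathbf{m} \subseteq P$ for $r$ a suitable entry of $y$ and $\mathbf{m}$ a row of $x$, ranging over all such pairs, and primeness of $P$ then forces either all rows of $x$ in $P$ (so $x \in I$) or $rM\subseteq P$ for all relevant $r$ (so $yS\subseteq I$). The semiprime case is the same argument with $y=x$, and is notationally lighter: $xSx \subseteq I$ reads, row by row, as "for every row $\mathbf{m}$ of $x$ and every $i$, $m_i R\mathbf{m} \subseteq P$" — wait, it gives $m_i \mathbf{m}' \in P$ where $\mathbf{m},\mathbf{m}'$ range over rows of $x$ — and I would observe it suffices to test with $x$ having a single nonzero row to recover exactly the semiprime condition on $P$, while the general single-row-per-test reduction handles the converse.

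The main obstacle I anticipate is not conceptual but combinatorial bookkeeping: matching "$xSx \subseteq I$" (a condition quantifying over the whole matrix ring $S$ sandwiched in) with the submodule condition that only sees individual rows $m_i$. The resolution is the reduction to rank-one matrices: since $\Psi(N)$ is determined row-by-row and $\Phi$ only records rows, the condition $xSx\subseteq I$ is equivalent to its restriction where $x$ has at most one nonzero row, and for such $x$ with single row $\mathbf{m}$ one computes $x\,E_{1j}\,x$ (choosing the elementary matrix that picks out coordinate $j$ and replants it) has rows that are scalar multiples $m_j\mathbf{m}$, so $xSx\subseteq I \iff m_j\mathbf{m}\in P$ for all $j$ — which is verbatim the semiprime condition. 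One must be slightly careful that the single-row reduction is valid in both directions (it is, because $I=\Psi(\Phi(I))$ is closed under the row-extraction maps), and that in the noncommutative setting the appearance of $R$ in the middle ($m_iR\mathbf{m}$, $rR\mathbf{m}$) is correctly produced by letting the middle factor $s\in S$ have an arbitrary entry. Once this rank-one reduction is in hand, all three equivalences follow by the same short computation, and I would write them up together.
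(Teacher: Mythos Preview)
Your proposal is correct and follows essentially the same approach as the paper: both arguments translate the module-theoretic conditions across $\Phi,\Psi$ by testing against matrices with a single nonzero row and by sandwiching elementary matrices $rE_{i,j}$ to isolate the products $x_{k,i}\,r\,\mathbf{y}_j$ (respectively $x_{j,i}\,r\,\mathbf{x}_j$) inside $N$. The only cosmetic difference is that for the direction ``$N$ semiprime $\Rightarrow$ $\Psi(N)$ semiprime'' the paper reads off the $j$-th row of $\X(rE_{i,j})\X$ directly for a general $\X$, whereas you first reduce to a rank-one $\X$ via left multiplication by $E_{jj}$; both routes yield $x_{j,i}R\mathbf{x}_j\subseteq N$ and finish identically.
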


\begin{proof} 
Suppose that $I$ is a prime left ideal of $M_n(R)$. To show that $\Phi(I)$ is a prime submodule 
pick $x \in R$ and $\textbf{m} \in R^n$ such that $x R \mathbf{m} \subseteq \Phi(I)$.
It follows that
$$x M_n(R) \left[ \begin{array}{cc} \mathbf{m} \\ \mathbf{0} \\ \vdots \\ \mathbf{0} \end{array} \right]
\subseteq  \left[ \begin{array}{cc} x R \mathbf{m} \\ x R \mathbf{m} \\ \vdots \\ x R \mathbf{m} \end{array} \right]
\subseteq \left[ \begin{array}{cc} \Phi(I) \\ \Phi(I) \\ \vdots \\ \Phi(I)  \end{array} \right]
= \Psi (\Phi(I))=I.
$$
It follows that either $\left[ \begin{array}{cc} \mathbf{m} \\ \mathbf{0} \\ \vdots \\ \mathbf{0} \end{array} \right] \in I$ or $x M_n(R) \subseteq I$.
Therefore, either $\textbf{m} \in \Phi(I)$ or $x R^n \subseteq \Phi(I)$.

Conversely, let $N$ be a prime submodule of $R^n$. Pick $\X,\Y \in M_n(R)$ such that $\X M_n(R) \Y \subseteq \Psi(N)$.
For every $i,j=1,\ldots,n$ and $r \in R$,
$$\left[ \begin{array}{c} x_{1,i}  r \mathbf{y}_j \\ \vdots \\ x_{n,i}  r \mathbf{y}_j \end{array} \right]=
\X  (r E_{i,j}) \Y \in \X M_n(R) \Y \subseteq \Psi(N)=\left[ \begin{array}{cc} N \\ \vdots \\ N  \end{array} \right]$$
where $\mathbf{y}_j$ is the $j$-th row of $Y$. 
It follows that $x_{k,i} R \, \mathbf{y}_j \subseteq N$ for every $k,i,j=1,\ldots,n$. If 
$\Y \not\in \Psi(N)$ then $\mathbf{y}_j \not\in N$ for some $j$. 
It follows that $x_{k,i}R^n \subseteq N$ for every $k,i$. Therefore, $\X M_n(R) \subseteq \Psi(N)$.

Suppose that $I$ is a semiprime left ideal of $M_n(R)$. We will show that $\Phi(I)$ is a semiprime submodule of $R^n$. Pick $\mathbf{m} \in R^n$ such that $m_i R \mathbf{m} \subseteq  \Phi(I)$ for every $i$.
It follows that
$$\left[ \begin{array}{cc} \mathbf{m} \\ \mathbf{0} \\ \vdots \\ \mathbf{0} \end{array} \right]
M_n(R) \left[ \begin{array}{cc} \mathbf{m} \\ \mathbf{0} \\ \vdots \\ \mathbf{0} \end{array} \right] \subseteq
\left[ \begin{array}{cc} \mathbf{m} \\ \mathbf{0} \\ \vdots \\ \mathbf{0} \end{array} \right]
\left[ \begin{array}{cc} R \mathbf{m} \\ R\mathbf{m} \\ \vdots \\ R \mathbf{m} \end{array} \right] \subseteq
 \left[ \begin{array}{cc} \sum_{i=1}^n m_i R \mathbf{m} \\ \mathbf{0} \\ \vdots \\ \mathbf{0} \end{array} \right] \subseteq I $$
Since $I$ is semiprime, we get  $\left[ \begin{array}{cc} \mathbf{m} \\ \mathbf{0} \\ \vdots \\ \mathbf{0} \end{array} \right] \in I$ 
which implies that $\mathbf{m} \in \Phi(I)$.

Conversely, suppose that $N$ is a semiprime submodule of $R^n$. 
If $\Psi(N)$ is not semiprime, then there exists $\X \in M_n(R) \setminus \Psi(N)$ such that $\X M_n(R) \X \subseteq \Psi(N)$.
Pick $j$ such that $\mathbf{x}_j$, the $j$-th row of $\X$, does not belong to $N$. For every $i=1,\ldots,n$ and $r \in R$ we have that
$\X  (r E_{i,j}) \X \in \Psi(N)$. It follows that the $j$-th row of $\X (r E_{i,j}) \X$, which is equal to $x_{j,i} r \mathbf{x}_j$, belongs to $N$.
Since $N$ is semiprime, it follows that $\mathbf{x}_j \in N$, a contradiction. 
\end{proof}

\begin{rem}
\label{rem5}
Let $R$ be a commutative ring and $\mathfrak{p}$ a prime ideal of $R$.
A left ideal $I$ of $M_n(R)$ corresponds to a $\mathfrak{p}$-prime submodule of $R^n$
iff $\{x \in R \mid x M_n(R) \subseteq I\}=\mathfrak{p}$ iff $\{\X \in M_n(R) \mid \X M_n(R) \subseteq I\}=M_n(\mathfrak{p})$.
A left ideal of $M_n(R)$ corresponds to a maximal $\mathfrak{p}$-prime submodule of $R^n$
iff it is of the form $D_{\mathfrak{p},\mathbf{u}}$ where $\mathbf{u} \in R^n \setminus \mathfrak{p}^n$.
\end{rem}

Theorem \ref{thm5} is the main result section.

\begin{thm}
\label{thm5}
Let $R$ be a commutative unital ring  and $n \in \NN$. 
\begin{enumerate}
\item Every semiprime left ideal of $M_n(R)$ is equal to the intersection of all prime left ideals  containing it.
\item If $R$ is a Jacobson ring then every prime left ideal of $M_n(R)$ is equal to the intersection of all maximal left ideals containing it.
\end{enumerate}
\end{thm}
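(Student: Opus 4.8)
The plan is to deduce both statements from the already established module-theoretic results by transporting them across the inclusion-preserving bijection $I \mapsto \Phi(I)$, $N \mapsto \Psi(N)$ between left ideals of $M_n(R)$ and submodules of $R^n$. I would first record the formal properties of this bijection that make the reduction work: it is a bijection with $\Psi\Phi = \id$ and $\Phi\Psi = \id$, it preserves inclusions in both directions (this is used already in Remark \ref{rem4}), and consequently it carries arbitrary intersections to arbitrary intersections. The intersection property is immediate from the explicit description $\Psi(N) = \left[\begin{smallmatrix} N \\ \vdots \\ N \end{smallmatrix}\right]$: a matrix lies in $\Psi\bigl(\bigcap_\alpha N_\alpha\bigr)$ iff every one of its rows lies in each $N_\alpha$ iff it lies in each $\Psi(N_\alpha)$; and $\Phi$ inherits the same property as the inverse of $\Psi$. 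I would also note in passing that for commutative $R$ the noncommutative notions of prime and semiprime submodule used in Proposition \ref{thm4} reduce to the ones from Sections \ref{sec2}--\ref{sec3}, so Theorem \ref{thm2} and Proposition \ref{cor2} apply to the objects $\Phi(I)$.

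\textbf{Part (1).} Let $I$ be a semiprime left ideal of $M_n(R)$ and put $N = \Phi(I)$, so that $I = \Psi(N)$. By Proposition \ref{thm4}, $N$ is a semiprime submodule of $R^n$, and by Theorem \ref{thm2} we have $N = \bigcap_P P$, the intersection being taken over all prime submodules $P$ of $R^n$ with $N \subseteq P$. Applying $\Psi$ and using that it preserves intersections and sends prime submodules to prime left ideals (Proposition \ref{thm4}), we obtain $I = \Psi(N) = \bigcap_P \Psi(P)$, an intersection of prime left ideals of $M_n(R)$, each containing $I$. Conversely, since the correspondence is an inclusion-preserving bijection, every prime left ideal of $M_n(R)$ containing $I$ is of the form $\Psi(P)$ for a prime submodule $P$ with $N \subseteq P$; hence the family $\{\Psi(P)\}$ is exactly the family of all prime left ideals containing $I$, and $I$ equals their intersection.

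\textbf{Part (2).} Assume $R$ is a Jacobson ring and let $I$ be a prime left ideal of $M_n(R)$; put $N = \Phi(I)$, which is a prime submodule of $R^n$ by Proposition \ref{thm4}. Proposition \ref{cor2}(2) gives $N = \bigcap_C C$ over all maximal submodules $C$ of $R^n$ with $N \subseteq C$. Applying $\Psi$ and invoking Remark \ref{rem4} (maximal submodules of $R^n$ correspond to maximal left ideals of $M_n(R)$), we get $I = \Psi(N) = \bigcap_C \Psi(C)$, an intersection of maximal left ideals of $M_n(R)$; and, exactly as in Part (1), these $\Psi(C)$ are precisely the maximal left ideals containing $I$.

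\textbf{Main obstacle.} There is essentially no hard step: all the substance resides in Theorem \ref{thm2} and Proposition \ref{cor2}, and Part (2) of the present statement is not even needed separately once one notes it is the transport of Proposition \ref{cor2}(2). The only points requiring care are verifying that $\Phi$ and $\Psi$ commute with arbitrary intersections and that ``prime (resp.\ maximal) left ideals containing $I$'' matches ``prime (resp.\ maximal) submodules containing $N$'' under the bijection, both of which are routine consequences of the explicit formulas for $\Phi$ and $\Psi$.
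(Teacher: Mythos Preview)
Your proposal is correct and follows exactly the paper's approach: the paper's proof consists of two sentences, citing Theorem~\ref{thm2} and Proposition~\ref{thm4} for part~(1), and Propositions~\ref{cor2} and~\ref{thm4} for part~(2), which is precisely the transport-along-the-bijection argument you spell out. Your version is more explicit (verifying that $\Phi,\Psi$ respect intersections and that the families of prime/maximal objects match up), but there is no difference in strategy.
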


\begin{proof}
The first part follows from Theorem \ref{thm2} and Proposition \ref{thm4}.
The second part follows from Propositions \ref{cor2} and \ref{thm4}.
\end{proof}

Theorem \ref{thm6} is an extension of Hilbert's Nullstellenstz to matrix polynomials.
It is the main application of our results.
It is a consequence of Theorem \ref{thm5} and Remark \ref{rem4}. 
We will give an alternative proof that uses Theorem \ref{thm3} and Proposition \ref{thm4}.

\begin{thm}
\label{thm6}
Let $R=k[x_1,\ldots,x_d]$ where $k$ is an algebraically closed field. For every $\G_1,\ldots,\G_m,\F \in M_n(R)$ the following are equivalent.
\begin{enumerate}
\item $\F$ belongs to the smallest semiprime left ideal of $M_n(R)$ that contains $\G_1,\ldots,\G_m$.
\item For every $\a \in k^d$ and every $\vv \in k^n$ such that $\G_1(a)\vv=\ldots=\G_m(a)\vv=0$ we have $\F(a)\vv=0$.
\end{enumerate}
\end{thm}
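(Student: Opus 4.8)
The plan is to reduce the statement to Theorem \ref{thm3} by transporting it through the inclusion-preserving bijection $N\mapsto\Psi(N)$, $I\mapsto\Phi(I)$ between submodules of $R^n$ and left ideals of $M_n(R)$ introduced before Proposition \ref{thm4}. For $\X\in M_n(R)$ write $\mathbf{x}^{(1)},\dots,\mathbf{x}^{(n)}\in R^n$ for its rows. Two elementary observations are used repeatedly: first, for $\a\in k^d$ and $\vv\in k^n$ the $l$-th coordinate of the vector $\X(\a)\vv\in k^n$ equals $\mathbf{x}^{(l)}(\a)\vv$, so $\X(\a)\vv=0$ if and only if $\mathbf{x}^{(l)}(\a)\vv=0$ for every $l=1,\dots,n$; and second, by definition of $\Psi$, a matrix $\X$ lies in $\Psi(N')$ if and only if all of its rows lie in $N'$.

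Now let $N$ be the smallest semiprime submodule of $R^n$ containing the finite family of rows $\{\mathbf{g}_j^{(l)} : 1\le j\le m,\ 1\le l\le n\}$ of $\G_1,\dots,\G_m$ (it exists by Theorem \ref{thm2}, or simply as an intersection of semiprime submodules of $R^n$). By Proposition \ref{thm4}, $\Psi(N)$ is a semiprime left ideal of $M_n(R)$, and it contains each $\G_j$ since every row of $\G_j$ lies in $N$. Conversely, if $I$ is any semiprime left ideal containing $\G_1,\dots,\G_m$, then $\Phi(I)$ is a semiprime submodule containing every $\mathbf{g}_j^{(l)}$, whence $N\subseteq\Phi(I)$ and $\Psi(N)\subseteq\Psi(\Phi(I))=I$. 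Thus $\Psi(N)$ is the smallest semiprime left ideal $L$ containing $\G_1,\dots,\G_m$, so statement (1) says precisely that $\F\in\Psi(N)$, i.e. that $\mathbf{f}^{(i)}\in N$ for every row $\mathbf{f}^{(i)}$ of $\F$. By Theorem \ref{thm3}, applied with generators $\{\mathbf{g}_j^{(l)}\}$ and target $\mathbf{f}^{(i)}$, the membership $\mathbf{f}^{(i)}\in N$ holds if and only if $\mathbf{f}^{(i)}(\a)\vv=0$ for every $\a\in k^d$ and $\vv\in k^n$ with $\mathbf{g}_j^{(l)}(\a)\vv=0$ for all $j,l$. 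Using the first observation above, the hypothesis here is exactly $\G_1(\a)\vv=\dots=\G_m(\a)\vv=0$, and requiring $\mathbf{f}^{(i)}(\a)\vv=0$ for all $i$ is exactly requiring $\F(\a)\vv=0$. Since the finite universal quantifiers over $i$ and over the pairs $(\a,\vv)$ commute, ``$\mathbf{f}^{(i)}\in N$ for all $i$'' is equivalent to statement (2), which finishes the proof.

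The argument is essentially bookkeeping, and no step is genuinely hard. The one point deserving care is the identity $L=\Psi(N)$, which rests on Proposition \ref{thm4} together with the two observations of the first paragraph; one should also check that, although Section \ref{sec4} allows $R$ to be noncommutative, for the commutative ring $R=k[x_1,\dots,x_d]$ at hand the notion of semiprime submodule of $R^n$ used in Theorem \ref{thm3} agrees with the one used in Proposition \ref{thm4}, since then $m_iR\mathbf{m}\subseteq N$ amounts to $m_i\mathbf{m}\in N$. (An alternative, less self-contained route is to invoke Theorem \ref{thm5} to replace ``semiprime'' by ``prime'' and then Remark \ref{rem4} to replace ``prime'' by ``maximal'', the maximal left ideals $D_{\mathfrak{m}_\a,\vv}$ being exactly the vanishing conditions appearing in (2).)
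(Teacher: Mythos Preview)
Your proof is correct and follows essentially the same route as the paper: transport the problem through the $\Phi/\Psi$ correspondence, use Proposition \ref{thm4} to identify the smallest semiprime left ideal containing the $\G_j$ with $\Psi(N)$ for $N$ the smallest semiprime submodule containing their rows, and then invoke Theorem \ref{thm3}. Your explicit verification that $\Psi(N)$ is indeed minimal among semiprime left ideals, and your remark that the commutative and noncommutative notions of semiprime submodule coincide here, make the argument slightly more self-contained than the paper's version.
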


\begin{proof}
Let $I$ be the left ideal of $M_n(R)$ generated by $\G_1,\ldots,\G_m$
and let $\sqrt{I}$ be the smallest semiprime ideal of $M_n(R)$ containing $I$.
Then $\Phi(I)$ is the submodule of $R^n$ generated by all rows of all $\G_i$
and, by Proposition \ref{thm4}, $\Phi(\sqrt{I})$ is the smallest semiprime submodule of $R^n$ that contains $\Phi(I)$.
By Theorem \ref{thm3}, $\Phi(\sqrt{I})$ is equal to the intersection
of all submodules of the form $C_{\mathfrak{m}_\a,\vv}$ that contain $\Phi(I)$.
It follows that $\sqrt{I}$ is equal to the intersection of all submodules of 
the form $\Psi(C_{\mathfrak{m}_\a,\vv})$ that contain $I$. Finally, 
$\Psi(C_{\mathfrak{m}_\a,\vv})=D_{\mathfrak{m}_\a,\vv}=\{\G \in M_n(R) \mid \G(\a)\vv=0\}$ which completes the proof.
\end{proof}

\end{document}